\documentclass[a4paper,11pt]{article}
\usepackage{hyperref,makecell,amsmath,amssymb,amscd,amsthm,makeidx,txfonts,graphicx,url,bm,relsize}
\usepackage{a4wide,xy}
\usepackage{float,blkarray}
\usepackage{tikz}
\usepackage{amssymb}
\usepackage{tikz-cd}
\usepackage{geometry}
\let\oldmarginpar\marginpar
\renewcommand\marginpar[1]{\-\oldmarginpar[\raggedleft\footnotesize #1]
{\raggedright\footnotesize #1}}

\makeindex

\xyoption{all}
\input{xypic}

\numberwithin{equation}{section}

\newtheorem{thm}{Theorem}[section]
\newtheorem{proposition}[thm]{Proposition}
\newtheorem{corollary}[thm]{Corollary}
\newtheorem{conjecture}[thm]{Conjecture}

\newtheorem{lem}[thm]{Lemma}

\theoremstyle{remark}
\newtheorem{remark}[thm]{Remark}

\theoremstyle{definition}

\newcommand{\nam}[1]{\textcolor{teal}{#1}}

\def\SL{{\rm SL}}

\begin{document}

\title{Symplectic resolutions of moduli spaces of G-Higgs bundles}

\author{  GyeongHyeon Nam \\ {\it
   Aalto University} \\{\tt  gyeonghyeon.alg@gmail.com}  
 }
 \date{}

\pagestyle{myheadings}

\maketitle



\begin{abstract}The goal of this paper is to study the symplectic resolution of the moduli space of $G$-Higgs bundles over a compact Riemann surface with genus at least $2$.
\end{abstract}
\tableofcontents

\section{Introduction}

Let $G$ be a complex connected reductive group. Let us recall that a $G$-Higgs bundle on a compact Riemann surface $\Sigma$ is a pair $(P,\Phi)$, where $P$ is a principal holomorphic $G$-bundle on $\Sigma$ and $\Phi \in H^0(\Sigma,P(\mathfrak{g})\otimes K)$, where $K$ is the canonical bundle on $\Sigma$. Note that $\Phi$ is called the Higgs field. The notions of stable, semistable and polystable $G$-Higgs bundles are introduced in \cite[Definition 2.9]{GGM}. Note that this is an equivalent definition to semistable $G$-Higgs bundles in \cite{SimpsonII}.
Roughly speaking, a $G$-Higgs bundle $(P,\Phi)$ is polystable if it is semistable and the structure group of $P$ can be reduced to a   subgroup, and this gives a stable pair, and this corresponds to the process of looking at a polystable vector bundle as a direct sum of stable vector bundles of the same slope. 

In this paper, we consider 
the moduli space of  $G$-Higgs bundles with vanishing Chern classes  over a compact Riemann surface with genus $g\geq 2$   denoted  by $M_{Dol}^{vc}(G)$. It is well-known that there is a corresponding Betti moduli space (which is also called character variety) denoted by $M_B(G)$. When we study the symplectic geometry of $M_{Dol}^{vc}(G)$ and $M_B(G)$, we have the following natural question:
\begin{conjecture}\label{conj:main}
A moduli space $M_{Dol}^{vc}(G)$ admits   a symplectic resolution if and only if a moduli space $M_{B}(G)$ admits   a symplectic resolution.
\end{conjecture}
The parabolic version of this conjecture is also suggested in \cite[Conjecture 7.14]{ST}.
From this conjecture, we consider the $G$-Higgs bundle side corresponding to the following result:
\begin{thm}\cite[Theorem 7.1 and Corollary 7.3]{HSS24}Let us assume that $g>1$.
When $G$ is semisimple and its Dynkin diagram has no $A_1$-component or $g\geq 3$, then the irreducible component of   $M_B(G)$ containing the trivial representation does not admit a symplectic resolution.
\end{thm}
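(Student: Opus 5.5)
The plan is to reduce the non-existence of a symplectic resolution to a local statement at the most singular point, the trivial representation $\rho_0$, and then to apply Kaledin's and Namikawa's criteria together with a factoriality or terminality argument. The component $M_B(G)^0$ containing $\rho_0$ is a normal variety with symplectic singularities. Since $\rho_0$ is fixed by all of $G$, the étale slice theorem of Luna gives a local model: near $\rho_0$, $M_B(G)^0$ is étale-locally (or formally) isomorphic to the Hamiltonian reduction
\[
\mu^{-1}(0)/\!\!/G, \qquad \mu:\mathfrak{g}^{2g}\to\mathfrak{g},\quad \mu(a_1,b_1,\dots,a_g,b_g)=\sum_{i=1}^g [a_i,b_i],
\]
i.e.\ the quadratic cone $\mathcal{C}_g(\mathfrak{g})$ of the $g$-fold product $T^*\mathfrak{g}^g$. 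Any symplectic resolution of $M_B(G)^0$ would restrict to a (conical, after a standard $\mathbb{C}^*$-argument) symplectic resolution of this cone. So it suffices to show that $\mathcal{C}_g(\mathfrak{g})$ admits no symplectic resolution.

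For the local model I would follow the strategy of Bellamy--Schedler for $\mathfrak{gl}_n$. The first step is to show that $\mathcal{C}_g(\mathfrak{g})$ has terminal singularities. By Namikawa, for symplectic singularities this is equivalent to the singular locus having codimension at least $4$. The singular locus is stratified by conjugacy classes of reductive stabilizers $L\subset G$ (Levi-type centralizers of reductive subalgebras). For each stratum, I will compare the dimension of the stratum, roughly $(2g-2)\dim Z(\mathfrak l)$ plus contributions from the slice, with $\dim\mathcal{C}_g(\mathfrak g)=(2g-2)\dim\mathfrak g$. The minimal codimension is attained at strata with stabilizer a torus of corank one or an $\mathrm{SL}_2$-type subgroup. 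Counting gives codimension at least $4$ precisely except when $g=2$ and $\mathfrak g$ has an $\mathfrak{sl}_2$ factor, where a codimension-$2$ stratum occurs. This is exactly the excluded case in the statement.

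The second step is to show that $\mathcal{C}_g(\mathfrak g)$ is locally $\mathbb{Q}$-factorial. Then a symplectic resolution, being crepant, would be small. A $\mathbb{Q}$-factorial terminal variety admits no nontrivial crepant projective birational morphism, since the exceptional locus of a small projective morphism to a $\mathbb{Q}$-factorial variety is a divisor. Since $\mathcal{C}_g(\mathfrak g)$ is singular, this is a contradiction. For factoriality I would use Drezet's or Knop's criterion: a GIT quotient $\mu^{-1}(0)/\!\!/G$ with $\mu^{-1}(0)$ a normal complete intersection that is factorial (by Grothendieck's theorem, if its singular locus has codimension $\ge 4$) and $G$ semisimple without characters gives a factorial quotient, provided the stable locus has complement of codimension $\ge 2$. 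The required codimension estimates for $\mu^{-1}(0)$ come from the same stratification count as above.

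The main obstacle is the uniform dimension count over all reductive stabilizer types for arbitrary semisimple $\mathfrak g$, especially the exceptional types. I would first try to bound the codimension of the stratum with stabilizer $L$ by $(g-1)\cdot 2(\dim\mathfrak g-\dim\mathfrak l)-\text{(slice correction)}$. I would then reduce to rank-one Levis, checking that the worst case is a rank-one Levi whose derived part is $\mathfrak{sl}_2$; that case is precisely what forces the $A_1$/genus hypothesis. Finally, the global statement about $M_B(G)^0$ follows from the local one, because the existence of a symplectic resolution is local on the base for conical symplectic singularities.
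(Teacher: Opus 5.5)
Your plan is essentially the paper's route. The paper does not prove this cited result itself, but Lemma~\ref{lem:singularopensubset} together with the proof of Theorem~\ref{thm:higgs-sym-resol} reproduces the HSS24 argument: reduce (formal local model plus a $\mathbb{C}^\times$-argument) to the cone $N_V/\!\!/G=\mu^{-1}(0)/\!\!/G$ with $V=g\mathfrak g$, show it is singular, factorial and terminal (symplectic singularities plus a singular locus of codimension $\ge 4$, via Namikawa), and conclude. The only difference is that the paper cites HSS24 (Theorems 4.7, F and G(4)) for the factoriality and codimension estimates you propose to check by hand.

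A few small corrections:
\begin{itemize}
\item The quadratic-cone model at $\rho_0$ comes from Goldman--Millson formality (Simpson's Prop.~10.5, and \'etale-locally via Artin approximation), not from Luna's slice theorem. Luna gives no linear model at a singular fixed point.
\item The bound $\operatorname{codim}\operatorname{Sing}(\mu^{-1}(0))\ge 4$ needed for Grothendieck's factoriality theorem does not follow from the stabilizer-type stratification of the quotient. That stratification only sees closed orbits, so you must also control tuples with non-closed orbits and nonzero centralizer (e.g.\ one spanned by a nilpotent element).
\item The critical stratum for $g=2$ is where an $\mathfrak{sl}_2$-component lies in a Cartan subalgebra, which has codimension $(2g-2)\cdot 2-2=4g-6$. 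It is not a stratum with corank-one torus or $\mathrm{SL}_2$ stabilizer.
\end{itemize}
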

The reductive version of this result is discussed in \cite[\S7.4]{HSS24}.

\subsection{Main result}
It is known when the Betti moduli space over a compact Riemann surface admits a symplectic resolution or not in \cite{BS23, HSS24, Cheng}.  Using this result, we give the following evidence for Conjecture \ref{conj:main}.

  \begin{thm}\label{thm:g-geq2}(Theorem \ref{thm:higgs-sym-resol})
  Recall that $g\geq 2$.
Let us assume that $G$ is semisimple and  the Dynkin diagram of $G$ has no $A_1$-component or $g\geq 3$. Then the irreducible component of moduli space $M_{Dol}^{vc}(G)$ corresponding to the identity component of $M_B(G)$ does not admit a symplectic resolution.
 \end{thm}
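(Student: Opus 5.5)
The plan is to transfer the non-existence result of \cite[Theorem 7.1 and Corollary 7.3]{HSS24} from the Betti side to the Dolbeault side. Symplectic resolutions are controlled by local analytic data: a normal Poisson variety admits a symplectic resolution only if each of its points does locally analytically. The two moduli spaces are not algebraically isomorphic, but they share local singularity types. The argument therefore reduces to a local statement at the polystable point that is most singular, namely the trivial $G$-Higgs bundle $(P_0,0)$ on the Dolbeault side and the trivial representation on the Betti side.

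\textbf{Local models at the trivial point.} By the deformation theory of Higgs bundles (Simpson, and Goldman--Millson-type formality), a neighbourhood of a polystable point $(P,\Phi)$ in $M_{Dol}^{vc}(G)$ is analytically isomorphic to a neighbourhood of $0$ in the quadratic cone $\mathbb{H}^1(C^\bullet)\cap\mu^{-1}(0)$, taken modulo the reductive stabilizer $\mathrm{Aut}(P,\Phi)$. Here $\mu$ is the quadratic moment map given by the bracket into $\mathbb{H}^2$. The same description holds for $M_B(G)$ at the corresponding representation, and the nonabelian Hodge correspondence identifies the two hypercohomology complexes together with their cup products. At the trivial point both local models equal
\[
\mathfrak{g}^{2g}/\!\!/\!\!/G=\{(x_i,y_i)\in \mathfrak{g}^{2g}: \textstyle\sum_i [x_i,y_i]=0\}/\!\!/G .
\]
I will cite this identification, noting that it is exactly the germ that \cite{HSS24} studies. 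Consequently the germ of $M_{Dol}^{vc}(G)$ at the trivial Higgs bundle is analytically isomorphic to the germ of $M_B(G)$ at the trivial representation. This isomorphism also respects the Poisson structures, up to the standard identification of the Goldman and Hitchin symplectic forms on the smooth loci.

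\textbf{Transfer of the obstruction.} Suppose the relevant component $X$ of $M_{Dol}^{vc}(G)$ had a symplectic resolution $\pi\colon \tilde X\to X$. Restricting to a small analytic neighbourhood of the trivial point gives a projective symplectic resolution of the germ of $\mathfrak{g}^{2g}/\!\!/\!\!/G$ at $0$. The proof of \cite[Theorem 7.1]{HSS24} in fact shows that this local germ has no symplectic resolution, even locally analytically. Their argument uses a $\mathbb{Q}$-factoriality and terminality criterion (Namikawa), or equivalently the absence of crepant resolutions. It is $\mathbb{C}^*$-equivariant and conical, so a local resolution can be spread out to a global one on the cone. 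Since the cone is conical, I would invoke Kaledin/Namikawa: a germ of a conical symplectic singularity admits a symplectic resolution if and only if the cone does. This yields a contradiction. The hypotheses on the Dynkin diagram and on $g$ enter only through \cite{HSS24}.

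\textbf{Main obstacle.} The delicate step is the conical-versus-germ passage, together with the identification of the components. One must check that the component of $M_{Dol}^{vc}(G)$ matching the identity component of $M_B(G)$ contains the trivial Higgs bundle. This follows because nonabelian Hodge is a homeomorphism sending the trivial representation to $(P_0,0)$. One must also check that the local obstruction in \cite{HSS24} is stated for the analytic germ rather than for the global variety. If their proof is only global, I would first reprove the local form: terminal $\mathbb{Q}$-factorial singularities are local properties, and by Namikawa, for a conical symplectic variety, $\mathbb{Q}$-factorial terminality of the germ at the vertex implies non-existence of a resolution. This reduces everything to the analytic isomorphism of germs.
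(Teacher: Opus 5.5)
Your proposal is correct and follows essentially the paper's route. Simpson's isosingularity theorem identifies the neighbourhood of the trivial point with the completion of the cone $N_V/\!\!/G$, $V=g\mathfrak{g}$ (your $\mathfrak{g}^{2g}/\!\!/\!\!/G$); the conical structure lets a resolution of the germ spread to the whole cone (the paper cites the proof of \cite[Lemma 6.16]{BS21}); and the cone is then ruled out as singular, factorial and terminal via \cite{HSS24}, which is exactly your fallback argument.

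The differences are cosmetic: the paper works with formal completions rather than analytic germs, and it does not spell out your check that the relevant component contains the trivial Higgs bundle.
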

 The  connected reductive group case is considered in Corollary \ref{coro:reductivecase}. 
Note that the cases of type $A$ groups  are  already
 well-studied in \cite{KY, TirelliHiggs}. 
\begin{thm}\cite{KY,TirelliHiggs}\label{thm:KYT}
The following hold true:
\begin{enumerate}
\item When $g \geq 1$ and $ n \geq 2$, the moduli space $M_{Dol}^{vc}(\mathrm{GL}_n)$ has symplectic singularities.
\item They admit projective symplectic resolutions exactly in the cases $g=1$ or $(g,n)=(2,2)$.
\end{enumerate}
\end{thm}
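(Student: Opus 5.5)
The plan is to reduce both parts of Theorem~\ref{thm:KYT} to statements about local models. Throughout, ``local model'' means the analytic germ of $M_{Dol}^{vc}(\mathrm{GL}_n)$ at a polystable point $(E,\Phi)=\bigoplus_i (E_i,\Phi_i)^{\oplus m_i}$, with the $(E_i,\Phi_i)$ pairwise non-isomorphic and stable of degree $0$.

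\textbf{Local models.} First I would use Simpson's GIT construction together with a Luna étale slice at the polystable point. Combined with formality of the Dolbeault dg Lie algebra of $\mathrm{End}(E,\Phi)$, this identifies the germ analytically with the germ at $0$ of a Hamiltonian reduction
\[
\mu^{-1}(0)/\!\!/\textstyle\prod_i \mathrm{GL}_{m_i}.
\]
Here $\mu$ is the moment map on the representation space of a doubled quiver $Q$. The quiver $Q$ has one vertex per stable summand. Each vertex carries $g$ loop pairs when $(E_i,\Phi_i)$ is rigid of rank one, and more generally $\dim \mathrm{Ext}^1\big((E_i,\Phi_i),(E_j,\Phi_j)\big)$ arrows between vertices $i$ and $j$. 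The dimension vector is $(m_i)$.

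\textbf{Part (1).} With the local models in hand, part (1) reduces to the theorem of Bellamy--Schedler that such quiver varieties have symplectic singularities. The key facts are normality, via flatness of $\mu$ in the Crawley-Boevey range, and extension of the symplectic form on the smooth (stable) locus. Since having symplectic singularities is an analytically local property, this gives (1). Here $g\ge 1$ is exactly what puts all the quivers involved in the flat range.

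\textbf{Part (2): existence.} I treat the positive cases by construction.
\begin{itemize}
\item For $g=1$, polystable degree-$0$ Higgs bundles on the elliptic curve $E$ split into rank-one pieces. This gives $M_{Dol}^{vc}(\mathrm{GL}_n)\cong \mathrm{Sym}^n(T^*E)$, whose Hilbert--Chow morphism from $\mathrm{Hilb}^n(T^*E)$ is a projective symplectic resolution.
\item For $(g,n)=(2,2)$, I follow Kiem--Yoo. Blow up the deepest stratum, then the strict transform of the next one, and contract, in O'Grady's manner. The local model at the worst point is then $\mu^{-1}(0)/\!\!/\mathrm{GL}_2$ on $\mathfrak{gl}_2^{\oplus 4}$, i.e.\ O'Grady's $10$-dimensional singularity, which is known to admit a symplectic resolution.
\end{itemize}

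\textbf{Part (2): non-existence.} In all remaining cases, a projective symplectic resolution of the whole space would restrict to one of the germ at the most singular point $(\mathcal{O}^{\oplus n},0)$. That germ is the one-vertex quiver variety with $g$ loop pairs and dimension vector $n$, namely $\mu^{-1}(0)/\!\!/\mathrm{GL}_n$ on $\mathfrak{gl}_n^{\oplus 2g}$. It therefore suffices to show this germ has no crepant resolution. The route is to prove two things:
\begin{itemize}
\item \emph{Terminality:} the singular locus has codimension $\ge 4$. This follows from a dimension count on the strata indexed by decomposition types, valid for $g\ge 2$ and $(g,n)\ne(2,2)$.
\item \emph{Local factoriality:} the germ is locally factorial, or at least $\mathbb{Q}$-factorial.
\end{itemize}
A terminal, $\mathbb{Q}$-factorial singularity admits no crepant resolution. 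Indeed, the exceptional locus of a crepant projective resolution would have to be divisorial by $\mathbb{Q}$-factoriality, while terminality forces discrepancies $>0$.

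\textbf{Main obstacle.} I expect factoriality to be the hardest step. The first approach I would try is the Kaledin--Lehn--Sorger / Bellamy--Schedler argument. It computes the class group of $\mu^{-1}(0)/\!\!/\mathrm{GL}_n$ through the invariant-theoretic description: $\mu^{-1}(0)$ is a normal complete intersection whose unstable and strictly-semistable loci have high codimension, so its class group is governed by characters of $\mathrm{GL}_n$ trivial on the generic stabilizer. This yields $\mathrm{Cl}=0$ outside the exceptional case $(g,n)=(2,2)$. Finally, I would pass from this algebraic class-group statement to the analytic germ via Flenner/Artin approximation.
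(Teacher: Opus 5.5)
\paragraph{Verdict.} Your overall strategy is sound and largely parallels the cited proofs, but two steps fail as written: the local-to-global claim in part (1), and the existence argument for $(g,n)=(2,2)$. Both are fixable with standard arguments, given below.

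\paragraph{Comparison with the cited route.} The paper gives no proof of this statement; it quotes Kiem--Yoo and Tirelli. Tirelli's route is Simpson's isosingularity theorem combined with Bellamy--Schedler's results on the local quiver models, and the paper mirrors it for general $G$ in Section 2 and Lemma~\ref{lem:singularopensubset}. Your Luna-slice-plus-formality step is exactly the content of isosingularity, so working directly on the Dolbeault side is fine.

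Your non-existence argument is a legitimate variant. The paper uses the conical lemma: a formal symplectic resolution at the vertex globalizes to the cone, which is factorial and terminal. You instead push $\mathrm{Cl}=0$ from the graded ring to its completion, and Flenner's comparison theorem for positively graded rings is the right tool for that. You then pass to $\mathcal{O}_{M,x}$ by Mori's injectivity $\mathrm{Cl}(\mathcal{O}_{M,x})\hookrightarrow\mathrm{Cl}(\widehat{\mathcal{O}}_{M,x})$. This suffices, because $\pi_*H$ Cartier at $x$ is Cartier in a neighbourhood of $x$. Two corrections to the factoriality step:
\begin{itemize}
\item In the Drezet--KLS argument you need $\mu^{-1}(0)$ to be \emph{factorial}, not merely normal. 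A normal complete intersection can have nonzero class group.
\item With $\theta=0$ there is no unstable locus. The relevant locus is the non-simple one.
\end{itemize}

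\paragraph{Gap in part (1).} The claim that ``having symplectic singularities is an analytically local property'' is false as stated. Beauville's definition requires a symplectic form on all of $M^{\mathrm{reg}}$, and no local model supplies one. Moreover, the forms on the quiver models need not match a global form under the formal isomorphisms.
\begin{itemize}
\item The fix: start from the global holomorphic symplectic form on the smooth locus and show that it extends.
\item What does transfer from the local models is normality and rationality. Namikawa's criterion (normal, rational Gorenstein, symplectic form on the regular part) then finishes.
\item Flenner's extension theorem, which the paper uses for general $G$, is unavailable exactly when $g=1$ or $(g,n)=(2,2)$, since there the singular locus has codimension $2$.
\end{itemize}

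\paragraph{Gap in the existence case $(g,n)=(2,2)$.} Knowing that the germ at the worst point is O'Grady's singularity does not by itself produce a global resolution. In addition, Kiem--Yoo work with trivial determinant, and the ``contract in O'Grady's manner'' step is precisely the part that is hard to globalize and to check against the local models. There are two clean fixes.
\begin{itemize}
\item Blow up the reduced singular locus once, as Bellamy--Schedler do on the Betti side following Lehn--Sorger. This symplectically resolves both local models. Along the $8$-dimensional stratum the model is $\mathbb{C}^8\times\mathbb{C}^2/\{\pm1\}$, handled by the minimal resolution. At the deepest stratum it is $\mathbb{C}^4\times\overline{\mathcal{O}}_{[2,2]}$ with $\overline{\mathcal{O}}_{[2,2]}\subset\mathfrak{sp}_4$, handled by Lehn--Sorger. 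Since blowing up commutes with flat base change to completions and the reduced singular locus is intrinsic, the global blow-up is a projective symplectic resolution.
\item Alternatively, use $M_{Dol}^{vc}(\mathrm{GL}_2)\cong (M_{Dol}^{vc}(\SL_2)\times T^*\mathrm{Jac})/\mathrm{Jac}[2]$, where the action is free, together with a $\mathrm{Jac}[2]$-equivariant Kiem--Yoo resolution.
\end{itemize}

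\paragraph{Minor points.} Rank-one Higgs bundles are not rigid, though your count of $g$ loop pairs is right. In general a rank-$r$ stable summand contributes $(g-1)r^2+1$ loop pairs. Normality of the quiver models holds without flatness, by Crawley-Boevey.
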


\begin{remark}It is natural to consider a symplectic resolution of $M_{Dol}^{vc}(G)$ when $g=1$. In fact, it is easy to check some cases, for example, when $G$ is $\mathrm{SO}_{2n+1}$ or $\mathrm{Sp}_{2n}$,   we can check that the identity component of $M_{Dol}^{vc}(G)$ admits a symplectic resolution (via the Hilbert scheme) using \cite[Theorem 4.34]{FGN}.
However, we would like to answer Conjecture \ref{conj:main} over every almost simple group and every component of $M_{Dol}^{vc}(G)$ (or $M_{Dol}(G)$), so we focus on $g \geq 2$ case in this paper.
\end{remark}

\subsection*{Acknowledgement}
The author would like to express profound gratitude to Gwyn Bellamy for his invaluable support and generous explanations throughout this work, and also thanks Mirko Mauri for his valuable comments. Furthermore, the author was supported by Oscar Kivinen's Väisälä project grant of the Finnish Academy of Science and Letters.

\section{Symplectic singularities}
\label{ss:proof12}
 
Let us prove the existence of symplectic singularities of $M_{Dol}^{vc}(G)$ using the formal neighbourhood of $M_B(G)$. Note that this result is proved in \cite[Proposition 6.28]{CFHM} for an arbitrary reductive group; their proof relies on a codimension estimate for $M_{Dol}^{vc}(G)$ with the work of Faltings \cite{Fal}.
In addition, this is also stated in \cite[Following paragraph of Theorem G]{HSS24} without proof. 

\begin{remark}
In fact, Proposition \ref{prop:codimension4} and Theorem \ref{thm:g>1,symsing} are well-known from \cite{HSS24,CFHM}.  
To be self-contained, we write a brief proof here again.
Note that the proof of the following proposition will be used to prove our main result.
\end{remark}

\begin{proposition}\label{prop:codimension4}Recall that $g\geq 2$. Let us assume that $G$ is semisimple, and either $G$ contains no simple factor of rank $1$ or $g \geq 3$.
The codimension of the singular locus of $M_{Dol}^{vc}(G)$ is at least $4$.
\end{proposition}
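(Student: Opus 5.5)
Throughout, $M_{Dol}^{vc}(G)$ denotes the relevant component. I will deduce the codimension estimate from the corresponding estimate for the Betti side, using the nonabelian Hodge correspondence and the local structure of both moduli spaces at polystable points.

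\textbf{Local models agree.} Let $(P,\Phi)$ be polystable with vanishing Chern classes, and let $\rho$ be the corresponding reductive representation. Both have the same reductive automorphism group $H=\mathrm{Aut}(P,\Phi)\cong Z_G(\rho)$. By the Luna slice / Kuranishi description (Simpson for Higgs bundles, Goldman--Millson for representations), the formal (or analytic) neighbourhoods of the two points are both isomorphic to the formal neighbourhood of $0$ in
\[
\mu^{-1}(0)/\!\!/H ,
\]
the quadratic cone in $H^1$ of the deformation complex modulo $H$. The two deformation complexes are quasi-isomorphic, with cohomology $H^\bullet(\Sigma,\mathrm{ad}\rho)$. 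Hence $M_{Dol}^{vc}(G)$ and $M_B(G)$ have isomorphic complete local rings at corresponding points.

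\textbf{Transfer of the codimension.} Nonabelian Hodge gives a homeomorphism $M_{Dol}^{vc}(G)\to M_B(G)$ preserving the stratification by conjugacy class of $H$. Being singular is detected by the complete local ring, so the homeomorphism matches singular loci. Each stratum has the same complex dimension on both sides, namely $\dim H^1$ of the free part. Therefore $\operatorname{codim}\mathrm{Sing}\,M_{Dol}^{vc}(G)=\operatorname{codim}\mathrm{Sing}\,M_B(G)$.

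\textbf{Betti-side estimate.} It remains to show $\operatorname{codim}\mathrm{Sing}\,M_B(G)\ge 4$ under the stated hypotheses. This is the content of \cite{HSS24} (cf.\ also \cite{BS23}); I would cite it rather than reprove it. If a self-contained argument is needed, it goes as follows.

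1. The singular locus is contained in the union of strata where $\rho$ factors through a proper Levi subgroup $L$, so that $Z(L)^\circ$ is nontrivial, together with the non-free stable strata.
2. The stratum of type $L$ has dimension $(2g-2)\dim[L,L]+2g\dim Z(L)$, compared with $(2g-2)\dim G$.
3. The codimension is therefore $(2g-2)(\dim G-\dim L)+\dots$. This is at least $2(2g-2)\ge 4$ whenever $\dim G-\dim L\ge 2$, which always holds.
4. The tight cases are the minimal Levi degenerations in rank-$1$ factors when $g=2$, and these are excluded by hypothesis.

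\textbf{Main obstacle.} The delicate point is the first step: making precise that the formal neighbourhoods are isomorphic. The Higgs side is only analytically or formally identified via Kuranishi theory, and the comparison must also apply at points where $H$ is finite but nontrivial. Those points contribute orbifold singularities, and I must check that they too have codimension at least $4$. For these I would argue directly: the fixed locus of a finite automorphism in $H^1$ is symplectic, hence of even codimension. Codimension $2$ would force a reflection-type element, and that is ruled out by the same hypotheses.
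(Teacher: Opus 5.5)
Your main line is the paper's argument: Simpson's isosingularity theorem (\cite[Theorem 10.6]{SimpsonII}) identifies the formal neighbourhoods of corresponding points of $M_{Dol}^{vc}(G)$ and $M_B(G)$ with the completed cone $C/\!\!/C_G(p)$, and the Betti-side bound comes from \cite[Theorem G (4)]{HSS24}; the paper transfers the bound pointwise through the $(R_3)$ condition on the (completed) local rings, which does the same job as your homeomorphism-plus-dimension comparison. Do not offer your ``self-contained'' sketch as a substitute, however: the type-$L$ stratum has codimension $(2g-2)(\dim G-\dim L)-2\dim Z(L)$ (with a minus sign), which equals $2$ for $G=\SL_2$, $g=2$, $L=T$, so your step 3 is false and contradicts your step 4, and the claim that symplectic reflections are ``ruled out by the same hypotheses'' is unsupported.
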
 

\begin{proof} 
Let us recall that there are canonical isomorphisms between the formal neighbourhoods of $M_{Dol}^{vc}(G)$ and $M_B(G)$ from \cite[Theorem 10.6]{SimpsonII}. 
So we prove this proposition using $M_B(G)$. Recall that from the work of Simpson, the formal neighbourhood of a point $p$ of $M_B(G)$ is the formal completion of its tangent cone $C/\!\!/C_G(p)$, cf. \cite[Proposition 10.5]{SimpsonII}. Note that \cite[Theorem G (4)]{HSS24} implies that the codimension of the singular locus of the formal neighbourhood of $p$ at $M_B(G)$ is at least $4$ using \cite[\href{https://stacks.math.columbia.edu/tag/033A}{Tag 033A}]{stacks-project} and the fact that if the codimension of the singular locus of a variety $X$ is at least $m$ (i.e., Serre's condition $(R_{m-1})$), then its local rings also satisfy the condition $(R_{m-1})$.

Let us recall that   the codimension of the singular locus of the formal neighbourhood of $\tilde{p} \in M_{Dol}^{vc}(G)$ is equal to the codimension of the singular locus of the formal neighbourhood of the corresponding point of $M_B(G)$ (since the formal neighbourhood isomorphism preserves the singular locus). Then since the codimension of the singular locus of the formal neighbourhood of every point in $M_B(G)$ is at least $4$, so the codimension of the singular locus of the formal neighbourhood of every point in $M_{Dol}^{vc}(G)$ is at least $4$. This implies that the codimension of the singular locus of $M_{Dol}^{vc}(G)$ is at least $4$ from  \cite[\href{https://stacks.math.columbia.edu/tag/0353}{Tag 0353}]{stacks-project}  and \cite[\href{https://stacks.math.columbia.edu/tag/00MC}{Tag 00MC}]{stacks-project}.
\end{proof}

 \begin{thm}\label{thm:g>1,symsing}Recall that $g\geq 2$. Let us assume that  $G$ is semisimple,   and either $G$ contains no simple factor of rank $1$ or $g \geq 3$.
The moduli space $M_{Dol}^{vc}(G)$ has symplectic singularities.
 \end{thm}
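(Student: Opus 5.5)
The plan is to verify Beauville's definition directly: $M_{Dol}^{vc}(G)$ should be normal, and its smooth locus should carry a symplectic form whose pullback to some (equivalently any) resolution extends to a regular $2$-form. Since the statement is local in the analytic or formal topology, I will transport everything from the Betti side through Simpson's canonical isomorphism of formal neighbourhoods \cite[Theorem 10.6]{SimpsonII}, exactly as in the proof of Proposition \ref{prop:codimension4}.

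\textbf{Normality.} At a point $\tilde p$ corresponding to $p\in M_B(G)$, the completed local ring $\widehat{\mathcal O}_{M_{Dol}^{vc}(G),\tilde p}$ is isomorphic to $\widehat{\mathcal O}_{M_B(G),p}$. The latter is the completion at the origin of the tangent cone $C/\!\!/C_G(p)$ \cite[Proposition 10.5]{SimpsonII}. By \cite[Theorem G]{HSS24}, $M_B(G)$ is normal with symplectic singularities. Since $M_B(G)$ is excellent, the completion of a normal local ring is normal. A local ring is normal whenever its completion is normal, by faithful flatness of completion. Hence $M_{Dol}^{vc}(G)$ is normal at every point.

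\textbf{Symplectic form.} On the stable, smooth locus, $M_{Dol}^{vc}(G)$ carries the standard holomorphic symplectic form, which comes from the hyperk\"ahler structure, i.e.\ the Serre-duality pairing on the deformation complex. I will then use Proposition \ref{prop:codimension4}: the singular locus has codimension at least $4$. By Flenner's theorem, on a normal variety whose singular locus has codimension at least $3$, every $2$-form on the smooth locus extends to a regular $2$-form on any resolution. Applied to our form, this gives the extension condition immediately. Alternatively, Namikawa's criterion applies: a normal variety with rational Gorenstein singularities and a symplectic form on its smooth part has symplectic singularities. Rationality and the Gorenstein property can be checked on completions, so they pass from $M_B(G)$ through the formal isomorphism as well.

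\textbf{Main obstacle.} The hardest step is making sure the symplectic form on $M_{Dol}^{vc}(G)^{reg}$ is honestly nondegenerate on the entire regular locus, not only on the stable locus. The extension argument needs only a closed $2$-form that is nondegenerate on the smooth part. Since the strictly semistable smooth points lie in codimension at least $4$ by Proposition \ref{prop:codimension4}, I would extend the form from the stable locus by Hartogs. Nondegeneracy then holds because the top power of the form is a section of $\omega$ that is nonvanishing outside codimension $2$, and $\omega$ is trivial on the smooth locus by the same Hartogs argument, so the section is nonvanishing everywhere on the smooth locus.
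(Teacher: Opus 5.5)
Your overall route matches the paper's. Normality is transported from the Betti side through Simpson's formal isomorphism, a symplectic form is placed on the smooth locus, and Flenner's theorem is applied using Proposition \ref{prop:codimension4}. The paper takes the holomorphic symplectic form on the \emph{whole} smooth locus directly from the hyperk\"ahler structure (citing \cite{GR} and \cite{BS}). You instead start from the stable smooth locus and extend, and that extension step fails as written.

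\textbf{The gap.} You assert that the smooth, strictly polystable points lie in codimension at least $4$ ``by Proposition \ref{prop:codimension4}''. That proposition bounds the codimension of the \emph{singular} locus, and smooth points are by definition outside it, so it says nothing about them. Nothing you cite rules out these points forming a divisor in $M_{Dol}^{vc}(G)^{reg}$. In that case the Hartogs extension of your $2$-form could acquire poles along it, and its top power could vanish there. Both your extension step and your nondegeneracy step need this locus to have codimension at least $2$ in the smooth locus, so this must be proved separately.

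\textbf{How to close it.} A polystable but non-stable $G$-Higgs bundle reduces to a proper Levi subgroup $L$. So this locus is a finite union of images of moduli spaces of $L$-Higgs bundles, each of dimension $2(g-1)\dim L+2\dim Z(L)$. Since $\dim G-\dim L\geq 2\dim Z(L)$ (the simple roots outside $L$ are positive roots outside $L$), its codimension in the $2(g-1)\dim G$-dimensional moduli space is at least $(4g-6)\dim Z(L)\geq 2$. Alternatively, quote the symplectic structure on the full smooth locus, as the paper does. Once this is in place, your nondegeneracy argument is fine, since the zero locus of the top power is either empty or a divisor.

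\textbf{Smaller points.} Flenner's theorem extends $p$-forms when the singular locus has codimension at least $p+2$. For $2$-forms that means $4$, not $3$ as you state; this costs nothing here because Proposition \ref{prop:codimension4} supplies $4$. Your Namikawa-criterion alternative also needs a symplectic form on the entire smooth locus, so it does not bypass the gap.
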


 \begin{proof}
 Using the Isosingularity theorem (which shows that formal neighbourhoods of $M_{Dol}^{vc}(G)$ and $M_B(G)$ are isomorphic, cf. \cite[Theorem 10.6]{SimpsonII}), we can check that $M_{Dol}^{vc}(G)$ is normal with \cite[Theorem A]{Cheng}. (Note that the same method is used when $G=\mathrm{GL}_n$ in \cite[Theorem 4.6]{TirelliHiggs}.) Furthermore, this is quasi-projective, for example \cite{GO}.

To get the result, we show that $M_{Dol}^{vc}(G)$ has generically non-degenerate Poisson structure, and then by applying Flenner's theorem (since the codimension of the singular locus is at least $4$), we can finish the proof.
From \cite[Page 705]{GR} or \cite{BS}, the smooth locus of $M_{Dol}^{vc}(G)$ has a hyperk\"ahler structure, and so a holomorphic symplectic structure.  Then following the proof of \cite[Theorem 4.6]{TirelliHiggs} (since $M_{Dol}^{vc}(G)$ is equidimensional) and using Proposition \ref{prop:codimension4}, we can check that the smooth locus of $M_{Dol}^{vc}(G)$ admits a symplectic structure.\footnote{This is also studied in \cite[\S4]{BR94}.} 
Furthermore, we can check that $M_{Dol}^{vc}(G)$ has generically non-degenerate Poisson structure with the fact that the codimension of the singular locus is at least $4$ from Proposition \ref{prop:codimension4} again. So, by applying  Flenner's theorem, we can prove that $M_{Dol}^{vc}(G)$ has symplectic singularities.
 \end{proof}

 \section{Symplectic resolution}
 From the existence of symplectic singularities, the natural step is to consider its symplectic resolution.
Our goal is to prove the following.
 \begin{thm}\label{thm:higgs-sym-resol}Recall that $g\geq 2$. 
Let us assume that $G$ is semisimple and  the Dynkin diagram of $G$ has no $A_1$-component or $g\geq 3$. Then the irreducible component of moduli space $M_{Dol}^{vc}(G)$ corresponding to the identity component of $M_B(G)$ does not admit a symplectic resolution.
 \end{thm}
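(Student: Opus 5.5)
We argue by contradiction and reduce everything to a local statement at a single point, transported from the Betti side via the isosingularity theorem. Suppose the component $M$ of $M_{Dol}^{vc}(G)$ corresponding to the identity component of $M_B(G)$ admits a symplectic resolution $\pi\colon \widetilde M\to M$.

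\textbf{Step 1 (localize).} Choose a point $\tilde p\in M$ corresponding under \cite[Theorem 10.6]{SimpsonII} to a point $p\in M_B(G)$ at which \cite{HSS24} detects the obstruction. The natural choice is the trivial representation, or more generally the point used in \cite[Theorem 7.1]{HSS24}. Base-changing $\pi$ to the formal neighbourhood $\widehat{M}_{\tilde p}=\operatorname{Spf}\widehat{\mathcal O}_{M,\tilde p}$, or to the henselization, yields a proper birational morphism from a smooth formal scheme carrying a nondegenerate closed $2$-form extending the symplectic form on the smooth locus. In other words, we obtain a formal symplectic resolution of $\widehat{M}_{\tilde p}$.

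\textbf{Step 2 (transport to the Betti side).} By the isosingularity theorem, $\widehat{M}_{\tilde p}\cong \widehat{M_B(G)}_p$. By \cite[Proposition 10.5]{SimpsonII}, the latter is the completion at the cone point of $C/\!\!/C_G(p)$. Both symplectic forms on the smooth loci are determined up to scaling by the quadratic (Goldman/Atiyah--Bott) pairing on $H^1$. I would check that the isomorphism matches them, at least up to a symplectomorphism of the cone. Failing that, I would use that for conical symplectic singularities any formal resolution that is crepant, hence symplectic because the canonical class is trivial, suffices. This is cleaner: a resolution of a symplectic singularity is symplectic if and only if it is crepant, and crepancy is a condition on the canonical divisor, which is preserved by formal isomorphisms. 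Theorem~\ref{thm:g>1,symsing} guarantees that we are dealing with symplectic singularities on both sides.

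\textbf{Step 3 (algebraize and contradict).} The cone $C/\!\!/C_G(p)$ is conical, affine and has symplectic singularities, with a good $\mathbb{C}^*$-action. A crepant resolution of its completion at the vertex then algebraizes to a crepant, hence symplectic, projective resolution of the cone itself. This is a standard argument via Artin approximation/Grothendieck existence combined with the $\mathbb{C}^*$-equivariance of $\mathbb{Q}$-factorial terminalizations, following Namikawa. Equivalently, the local obstruction in \cite{HSS24} is already formulated so that it rules out resolutions of the germ. In \cite[Theorem 7.1]{HSS24}, under exactly our hypotheses on $G$ and $g$, the cone, or the germ at $p$, is shown not to admit a symplectic resolution. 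The usual mechanism is that it is $\mathbb{Q}$-factorial and terminal but singular, the codimension-$\geq 4$ estimate of Proposition~\ref{prop:codimension4} giving terminality. This contradiction proves the theorem.

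\textbf{Main obstacle.} The delicate point is Step 2/3: passing from a global resolution to a resolution of the formal germ, and then identifying the obstruction of \cite{HSS24} as a property of the germ alone. I would first try to avoid algebraization entirely. Local $\mathbb{Q}$-factoriality and terminality are properties of the complete local ring; terminality follows from the codimension-$4$ singular locus together with symplectic singularities. Since the resolution is symplectic, hence crepant, a terminal singular point cannot be resolved crepantly. So it suffices to show that $\widehat{\mathcal O}_{M,\tilde p}$ is $\mathbb{Q}$-factorial, which I would import from \cite{HSS24} through the formal isomorphism. $\mathbb{Q}$-factoriality of the completion implies it for the local ring, and the existence of a crepant resolution of a $\mathbb{Q}$-factorial terminal singularity forces it to be smooth.
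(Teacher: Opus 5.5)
Your proposal is correct and is essentially the paper's argument. You localize at the trivial Higgs bundle and transport a formal symplectic resolution to the completed tangent cone $N_V/\!\!/G$ by isosingularity. You then pass from the formal germ to the cone via its conical structure; the paper uses the proof of \cite[Lemma 6.16]{BS21} for this step. Finally, you conclude because the cone is singular, factorial and terminal, where terminality comes from symplectic singularities plus a singular locus of codimension at least $4$, which the paper spreads from the vertex to the whole cone by the contracting $\mathbb{C}^\times$-action.

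Your crepancy remark usefully fills the paper's silent identification of the two symplectic forms. In your non-algebraizing variant, note that $\mathbb{Q}$-factoriality of the completion does not follow formally from factoriality of the graded ring in \cite[Theorem 4.7]{HSS24}. It does hold here, because the singularity is graded and rational (Flenner's results on class groups of quasi-homogeneous singularities).
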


\begin{remark}
Recall that when $g=2$ and $G$ is a direct product of $\SL_2$'s, then the corresponding moduli space admits a symplectic resolution from \cite[Theorem 1.1]{KY}.
\end{remark}

 Before starting the proof of this result, we need to introduce the following lemma and a property of a cone.

 \begin{lem}\label{lem:singularopensubset}
Let us assume the same condition as in Theorem \ref{thm:higgs-sym-resol}.
The formal completion of the tangent cone of the trivial representation (denoted by $triv$) of $M_B(G)$ does not admit a symplectic resolution.
 \end{lem}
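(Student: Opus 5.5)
The plan is to identify the tangent cone at $triv$ explicitly and transfer the known non-existence of a symplectic resolution from the cone to its formal completion. At the trivial representation the centralizer is all of $G$. By \cite[Proposition 10.5]{SimpsonII} (Goldman--Millson), the tangent cone is $C/\!\!/G$, where $C\subset \mathfrak{g}^{2g}$ is the quadratic cone $\{(a_1,b_1,\dots,a_g,b_g): \sum_i [a_i,b_i]=0\}$. Equivalently, $C/\!\!/G$ is the Hamiltonian reduction $\mu^{-1}(0)/\!\!/G$ of the symplectic vector space $T^*(\mathfrak{g}^{g})$ by the diagonal adjoint action. This is a conical symplectic singularity, graded with positive weights, and under our hypotheses its singular locus has codimension at least $4$; the latter is \cite[Theorem G]{HSS24}, which was already used in Proposition \ref{prop:codimension4}. \cite[Theorem 7.1]{HSS24} is proved precisely by showing that this cone $X:=C/\!\!/G$ admits no symplectic resolution. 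I will take that as input.

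The remaining step is to pass from $X$ to its formal completion $\widehat X$ at the vertex $0$. Suppose $\hat\pi:\widetilde Y\to \operatorname{Spf}\widehat{\mathcal O}_{X,0}$, or its algebraization over $\operatorname{Spec}\widehat{\mathcal O}_{X,0}$, were a symplectic resolution. I would use the standard argument for conical symplectic singularities. Namikawa's results on Poisson deformations and the existence of $\mathbb{C}^*$-equivariant models show that a projective symplectic resolution of the completion of a conical symplectic singularity is $\mathbb{C}^*$-equivariantly algebraizable. Concretely, one chooses a relatively ample line bundle, uses that the resolution is determined by a $\mathbb{Q}$-factorial terminalization together with a movable cone chamber, and observes that these data are invariant under the contracting $\mathbb{C}^*$-action. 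One then descends the resolution to a global projective symplectic resolution $Y\to X$. The alternative route is via terminalizations. By BCHM, $X$ admits a $\mathbb{Q}$-factorial terminalization $Z\to X$, which can be made $\mathbb{C}^*$-equivariant, and $X$ has a symplectic resolution iff $Z$ is smooth. Formal completion commutes with this construction, so $\widehat X$ has a symplectic resolution iff $\widehat Z$ is smooth along the fibre over $0$. Because of the contracting $\mathbb{C}^*$-action, every point of $Z$ specializes to this fibre, so smoothness along the fibre is equivalent to smoothness of $Z$. Either way this contradicts \cite[Theorem 7.1]{HSS24}.

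I expect the main obstacle to be this algebraization/equivariance step. One has to be careful that "symplectic resolution" of a formal scheme means a projective birational morphism with symplectic source. One also has to justify that a non-equivariant resolution of $\widehat X$ can be replaced by one coming from $X$. I would first try the terminalization argument, since it avoids Namikawa's deformation theory. It only needs uniqueness of the analytic or formal type of singularities of $\mathbb{Q}$-factorial terminalizations in codimension, together with the fact that smoothness is an open condition stable under generization along $\mathbb{C}^*$-orbits. If that is delicate, I would instead invoke the direct statement in \cite{HSS24} (or Kaledin's local-to-global results), since they phrase non-existence of resolutions of $M_B(G)$ through exactly this local cone.
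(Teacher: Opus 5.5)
Your overall architecture matches the paper's. You identify the tangent cone at $triv$ with $X=N_V/\!\!/G$ ($V=g\mathfrak{g}$), and you move between $X$ and its completion at the vertex using the contracting $\mathbb{C}^\times$-action. For that step the paper simply invokes the argument in the proof of \cite[Lemma 6.16]{BS21}, which is the kind of argument you sketch. Your terminalization variant still has the unchecked points you flag; for instance, $\mathbb{Q}$-factoriality of $Z$ is not automatically inherited by its completion.

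The gap is in the other half, which is where the paper's proof does its real work: you never show that $X$ itself has no symplectic resolution. The \emph{statement} of \cite[Theorem 7.1]{HSS24} cannot supply this. Non-existence of a symplectic resolution of a component of $M_B(G)$ says nothing about a local model at one point, because the implication only runs from local non-existence to global non-existence. So ``this contradicts \cite[Theorem 7.1]{HSS24}'' is not a contradiction as written. Your assertion that its proof ``precisely'' treats the cone is an unverified claim about someone else's argument, standing in for the key step.

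The missing idea is the standard obstruction: a singular factorial variety with terminal singularities admits no symplectic resolution. The paper assembles it from stated results:
\begin{itemize}
\item $X$ is singular \cite[\S7.2]{HSS24}, factorial \cite[Theorem 4.7]{HSS24}, and has symplectic singularities \cite[Theorem F]{HSS24}.
\item Its singular locus has codimension $\geq 4$. This comes from \cite[Theorem G (4)]{HSS24} at the vertex. It propagates to all of $X$ because every component of the closed, $\mathbb{C}^\times$-stable singular locus passes through the vertex.
\item Hence $X$ is terminal by \cite[Corollary 1]{Nam}.
\end{itemize}
Now suppose $\pi\colon Y\to X$ were a symplectic resolution. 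It would be crepant. Since $X$ is factorial, $\operatorname{Ex}(\pi)$ has pure codimension one, and any exceptional divisor would have discrepancy $0$, contradicting terminality. Hence $\pi$ is an isomorphism, contradicting the fact that $X$ is singular (cf.\ \cite[Corollary 1.3]{Fu}).

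You quoted the codimension-$4$ bound but never used it; together with factoriality it is exactly what produces terminality. With this argument inserted, your proposal becomes the paper's proof.
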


  \begin{proof}
Note that the formal neighbourhood of $triv$ is well-studied in \cite{HSS24}, and we use a proof of Proposition \ref{prop:codimension4} when $p=triv$ and so $C_G(p)=G$. Recall again  the formal neighbourhood of $triv$ of $M_B(G)$ is the formal completion of its tangent cone $C/\!\!/G$.  From \cite[\S7]{HSS24}, $C$ is $N_V$ for $V=g\mathfrak{g}$. 

  From a property of a conical symplectic variety, if the formal completion of $N_V/\!\!/G$ at the vertex admits a symplectic resolution, then $N_V/\!\!/G$ globally admits a symplectic resolution from the proof of \cite[Lemma 6.16]{BS21}. 
  Conversely, if $N_V/\!\!/G$ globally admits a symplectic resolution, then the formal completion of $C/\!\!/G$ at the vertex admits a symplectic resolution by restriction.
 Therefore, in the case of cone, $N_V/\!\!/G$ admits a symplectic resolution if and only if the formal completion of $N_V/\!\!/G$ at the vertex  admits a symplectic resolution. 
Therefore, our problem is reduced to check that $N_V/\!\!/G$ does not admit a symplectic resolution. 

Note that $N_V/\!\!/ G$ is singular from \cite[\S7.2]{HSS24} (if it is smooth, then obviously it admits a symplectic resolution). Let us prove this by showing that $N_V/\!\!/ G$ is factorial and has terminal singularities.
From,  \cite[Theorem 4.7]{HSS24} we directly get that  $N_V/\!\!/ G$ is factorial. 
We can check that $N_V/\!\!/ G$  has terminal singularities by showing that it has symplectic singularities and the codimension of its singular locus is at least $4$, cf. \cite[Corollary 1]{Nam}. We have that $N_V/\!\!/ G$ has symplectic singularities from \cite[Theorem F]{HSS24}. So if the codimension of the singular locus of $N_V/\!\!/ G$ is at least $4$, then we can get that $N_V/\!\!/ G$ has terminal singularities. We have that the codimension of the singular locus of the formal neighbourhood is at least $4$ from \cite[Theorem G (4)]{HSS24} and the proof of \ref{prop:codimension4}. 
Then \cite[\href{https://stacks.math.columbia.edu/tag/0353}{Tag 0353}]{stacks-project} gives that  $\mathcal{O}_{N_V/\!\!/G,triv}$ has $(R_3)$ property. 
With the same logic in \cite[Proof of Proposition 6.16]{BS21}, we have that $\mathbb{C}[N_V/\!\!/G]$ is an $\mathbb{N}$-graded, connected algebra. Then we have that $\mathbb{C}^\times$-action on $N_V/\!\!/G$ contracts to $triv$ (cf. \cite[\S2]{BPW}), and this gives that the $N_V/\!\!/G$ itself satisfy $(R_3)$ property.
Therefore, this gives that the codimension of the singular locus of $N_V/\!\!/ G$ is at least $4$. Recall that a singular factorial variety with terminal singularities does not admit a symplectic resolution, cf. \cite[Corollary 1.3]{Fu} or \cite[Proof of Theorem 6.13]{BS21}.
Therefore, we are done.
   \end{proof}

Now we are ready to prove Theorem \ref{thm:higgs-sym-resol}.
 
\begin{proof}[Proof of Theorem \ref{thm:higgs-sym-resol}]

For contradiction, let us assume that the corresponding component of  $M_{Dol}^{vc}(G)$ admits a symplectic resolution, then any formal neighbourhood of every point of the component of $M_{Dol}^{vc}(G)$ also admits a symplectic resolution, cf. \cite[\S4.1 (B)]{TirelliHiggs}.

Let $(Y,y)$ be the formal neighbourhood in $M_{Dol}^{vc}(G)$ which corresponds to the formal completion of $(TC_{triv}(M_B(G)) ,triv)$. 
Then since $(Y,y)$ admits a symplectic resolution,  this induces a symplectic resolution on the corresponding formal completion. 
However, we already know that the formal completion of $(TC_{triv}(M_B(G)) ,triv)$ does not admit a symplectic resolution from Lemma \ref{lem:singularopensubset}. Then this gives a contradiction, and therefore, we are done.
\end{proof}

\begin{remark}
Actually, our proof suggests a method to consider other closed orbit representations instead of the trivial representation in Lemma \ref{lem:singularopensubset}. Let us consider the tangent cone $C_p/\!\!/H$ at $p\in M_B(G)$, where $H=C_G(p)$. Then we need to check that when the tangent cone $C_p/\!\!/H$ does not admits a symplectic resolution. If this does not admits a symplectic resolution, then with the same login in the proof of Theorem  \ref{thm:higgs-sym-resol}, we can check that the corresponding component of $M_{Dol}^{vc}(G)$ does not admit a symplectic resolution.
\end{remark}

 \subsection{Reductive case}
 Let us assume that $G$ is a connected reductive group, and let $R(G):=\{(a_1,b_1,\ldots , a_g,b_g)\in G^{2g}\,|\, \prod_{i=1}^g[a_i,b_i]=1\}$. Then $R(G)\simeq (R(G_s)\times Z^{2g})/F^{2g}$, where $G=G_sZ$ (where $Z$ is the connected centre of $G$), and $F=G_s\cap Z$ from \cite[\S7.4]{HSS24}. 
 Then we can also consider its tangent cone similar to the semisimple group case since  $\mathfrak{g}=\mathfrak{g}_s\oplus \mathfrak{z(g)}$, where $\mathfrak{z(g)}$ is the Lie algebra of the centre of $G$.
Under the moment map, the central part $\mathfrak{z(g)}$ will be vanishing, so we can get the following result:
 \begin{corollary}\label{coro:reductivecase}Recall that $g\geq 2$.
Let us assume that $G$ is connected, reductive and  the Dynkin diagram of $G$ has no $A_1$-component or $g\geq 3$. If $G$ is not a torus, then the irreducible component of moduli space $M_{Dol}^{vc}(G)$ corresponding to the identity component of $M_B(G)$ does not admit a symplectic resolution.
 \end{corollary}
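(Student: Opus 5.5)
We follow the proof of Theorem \ref{thm:higgs-sym-resol} and reduce everything to the semisimple part $G_s$ of $G$. Write $G=G_sZ$ with $Z$ the connected centre and $F=G_s\cap Z$ finite, as above. Since $G$ is not a torus, $G_s$ is a nontrivial semisimple group. Its Dynkin diagram is that of $G$, so the hypothesis (no $A_1$-component or $g\geq 3$) holds for $G_s$. Suppose, for contradiction, that the relevant component of $M_{Dol}^{vc}(G)$ admits a symplectic resolution. By \cite[\S4.1 (B)]{TirelliHiggs}, the formal neighbourhood of every point of that component then admits a symplectic resolution. By the isosingularity theorem \cite[Theorem 10.6]{SimpsonII}, the same holds for the formal completion of the tangent cone of $M_B(G)$ at $triv$, which is $C/\!\!/G$ with $C$ the tangent cone of $R(G)$ at the identity tuple.

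The key step is to identify $C/\!\!/G$. We use $\mathfrak{g}=\mathfrak{g}_s\oplus\mathfrak{z}(\mathfrak{g})$ and $R(G)\simeq (R(G_s)\times Z^{2g})/F^{2g}$ near the identity. Since $F^{2g}$ is finite and acts freely near the identity tuple (it translates by nontrivial central elements), the quotient map is étale there. Hence the tangent cone of $R(G)$ at $1$ is $N_{g\mathfrak{g}_s}\times \mathfrak{z}(\mathfrak{g})^{2g}$. The moment map for $T^*(\mathfrak{g}^g)$ takes values in $[\mathfrak{g},\mathfrak{g}]=\mathfrak{g}_s$, because the central part vanishes. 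The group $G$ acts through $G/Z$, and trivially on the central factor. Therefore
\[
C/\!\!/G\;\simeq\;(N_{g\mathfrak{g}_s}/\!\!/G_s)\times \mathfrak{z}(\mathfrak{g})^{2g},
\]
where the second factor is a smooth symplectic vector space. The formal completion at the vertex is thus the completed product of the formal completion of $N_{g\mathfrak{g}_s}/\!\!/G_s$ with a formal polydisc.

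It remains to show that $X\times\mathbb{C}^{2m}$, with $X=N_{g\mathfrak{g}_s}/\!\!/G_s$, has no symplectic resolution, and likewise for its formal completion. We reuse the argument of Lemma \ref{lem:singularopensubset}. First, $X\times\mathbb{C}^{2m}$ is a conical symplectic variety, since $\mathbb{C}^\times$ acts with positive weights on both factors. So, by the proof of \cite[Lemma 6.16]{BS21}, the formal statement is equivalent to the global one. Second, $X$ is factorial by \cite[Theorem 4.7]{HSS24}, and a product with affine space is still factorial because its class group is unchanged. Third, $X$ has symplectic singularities with singular locus of codimension at least $4$, and it is singular, as shown in the proof of Lemma \ref{lem:singularopensubset}. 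These properties are preserved under taking the product with $\mathbb{C}^{2m}$, so the product is terminal by \cite[Corollary 1]{Nam}. A singular, factorial, terminal symplectic variety has no symplectic resolution \cite[Corollary 1.3]{Fu}. This contradicts the assumption and proves the statement.

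The main obstacle is the identification of the tangent cone in the second paragraph, namely that the finite quotient by $F^{2g}$ and the central directions do not alter the local structure at $triv$. We handle it by working étale-locally at the identity, where the $F^{2g}$-quotient is trivial. If needed, there is a more direct route: a symplectic resolution of $X\times\mathbb{C}^{2m}$ would restrict, over a generic slice $X\times\{pt\}$ and by a $\mathbb{Q}$-factoriality argument, to a crepant resolution of $X$, contradicting Lemma \ref{lem:singularopensubset} applied to $G_s$.
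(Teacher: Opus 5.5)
Your proposal is correct and follows the same route as the paper's proof. Like the paper, you use the free $F^{2g}$-action to split $R(G)\simeq R(G_s)\times Z^{2g}$ étale-locally at $triv$, identify the tangent cone as $(N_{g\mathfrak{g}_s}/\!\!/G_s)\times\mathfrak{z}(\mathfrak{g})^{2g}$, and then rerun the conical, factorial, terminal and singular argument of Lemma \ref{lem:singularopensubset}; you spell out the product-stability checks that the paper leaves implicit, and the closing generic-slice remark is not needed.
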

 \begin{proof}
 The logic of the proof is identical with the proof of Theorem \ref{thm:higgs-sym-resol}. Since   no non-trivial element of $F^{2g}$ 
 fixes  $triv$, \'etale-locally near $triv$ we have $R(G) \simeq R(G_s) \times Z^{2g}$. From this observation, we can see that the tangent cone of $M_B(G)$ at $triv$ is isomorphic to $  N_{V_s}/\!\!/G_s\times \mathbb{C}^{2g\dim(C)}$, where $V_s=g \mathfrak{g}_s$. Then we can prove the desired result with the same logic of Theorem \ref{thm:higgs-sym-resol} and Lemma \ref{lem:singularopensubset} (i.e., codimension, factorial and terminal singularities).
 \end{proof}

\end{document}